\newtheorem{theorem}{Theorem}[section]
\newtheorem{lemma}[theorem]{Lemma}
\newtheorem{corollary}[theorem]{Corollary}
\theoremstyle{definition}
\newtheorem{definition}[theorem]{Definition}
\newtheorem{example}[theorem]{Example}
\theoremstyle{remark}
\newtheorem{remark}[theorem]{Remark}
\numberwithin{equation}{section}
\begin{document}
\setcounter{page}{1}


\title [on higher $\{g_n, h_n\}$-derivations] {on higher $\{g_n, h_n\}$-derivations}
\author[Amin Hosseini and Nadeem Ur Rehman]{Amin Hosseini$^{\ast}$ and Nadeem Ur Rehman}
\address{ Amin Hosseini, Department of Mathematics, Kashmar Higher Education Institute- Kashmar- Iran}
\email{\textcolor[rgb]{0.00,0.00,0.84}{hosseini.amin82@gmail.com, a.hosseini@kashmar.ac.ir}}

\address{ Nadeem Ur Rehman, Department of Mathematics,
Aligarh Muslim University,
Aligarh-202002 India}
\email{\textcolor[rgb]{0.00,0.00,0.84}{rehman100@gmail.com}}

\subjclass[2010]{47B47; 16W10}

\keywords{derivation; $\{g, h\}$-derivation; higher $\{g_n, h_n\}$-derivation; Jordan higher $\{g_n, h_n\}$-derivation.}

\date{Received: xxxxxx; Revised: yyyyyy; Accepted: zzzzzz.
\newline \indent $^{*}$ Corresponding author}

\begin{abstract}
In this article, we introduce the concepts of higher $\{g_n, h_n\}$-derivation and Jordan higher $\{g_n, h_n\}$-derivation, and then we give a characterization of higher $\{g_n, h_n\}$-derivations in terms of $\{g, h\}$-derivations. Using this result, we prove that every Jordan higher $\{g_n, h_n\}$-derivation on a semiprime algebra is a higher $\{g_n, h_n\}$-derivation.
\end{abstract} \maketitle

\section{Introduction and preliminaries}
Recently, in 2016 Bre$\check{s}$ar \cite{B} introduced the notion of $\{g, h\}$-derivation. Let $\mathcal{A}$ be an algebra over a field $\mathbb{F}$ with $char(\mathbb{F}) \neq 2$, and let $f, g, h :\mathcal{A} \rightarrow \mathcal{A}$ be linear maps. We say that $f$ is a $\{g, h\}$-derivation if $f(ab) = g(a) b + a h(b) = h(a) b + a g(b)$ for all $a, b \in \mathcal{A}$, and $f$ is called a Jordan $\{g, h\}$-derivation if $f(a \circ b) = g(a) \circ b + a \circ h(b)$ for all $a, b \in \mathcal{A}$, where $a \circ b = a b + ba$. We call $a \circ b$ the Jordan product
of $a$ and $b$. It is evident that $a \circ b = b \circ a$ for all $a, b \in \mathcal{A}$. The notion of a Jordan $\{g, h\}$-derivation is a generalization of what is called a Jordan generalized derivation in \cite{L}. Recall that a linear mapping $f :\mathcal{A} \rightarrow \mathcal{A}$ is called a \emph{Jordan generalized derivation} if there exists a linear mapping
$d :\mathcal{A} \rightarrow \mathcal{A}$ such that $f(a \circ b) = f(a) \circ b + a \circ d(b)$ for all $a, b \in \mathcal{A},$ where $d$ is called an associated linear map of $f$. It is clear that $f(a \circ b) = d(a) \circ b + a \circ f(b)$ for all $a, b \in \mathcal{A}.$ Obviously, the definition of a Jordan generalized derivation is generally not equivalent to the ordinary Jordan case of generalized derivations.
For more details in this regard, see \cite{B, L} and references therein.

As an important result, Bre$\check{s}$ar \cite[Theorem 4.3]{B} established that every Jordan $\{g, h\}$-derivation of a semiprime algebra $\mathcal{A}$ is a $\{g, h\}$-derivation. He also showed that every Jordan $\{g, h\}$-derivation of the tensor product of a semiprime and a commutative algebra is a $\{g, h\}$-derivation. Obviously, every $\{g, h\}$-derivation is a Jordan $\{g, h\}$-derivation, but the converse is in general not true. For instance in this regard see \cite[Example 2.1]{B}.

In this study, we introduce the concept of a higher $\{g_n, h_n\}$-derivation, and then we characterize it on algebras. Throughout this paper, $\mathcal{A}$ denotes an algebra over a field of characteristic zero, and $I$ denotes the identity mapping on $\mathcal{A}$. Let $f$ be a $\{g, h\}$-derivation on an algebra $\mathcal{A}$. An easy induction argument implies that $f^{n}(ab) = \sum_{k = 0}^{n}\Big(_{k}^{n}\Big) g^{n - k}(a) h^{k}(b) = \sum_{k = 0}^{n}\Big(_{k}^{n}\Big) h^{n - k}(a) g^{k}(b)$ (Leibniz rule) for each $a, b \in \mathcal{A}$ and each non-negative integer $n$, where $f^{0} = g^{0} = h^{0} = I$. If we define the sequences $\{f_n\}$, $\{g_n\}$ and $\{h_n\}$ of linear mappings on $\mathcal{A}$ by $f_{0} = g_0 = h_0 = I$, and $f_n = \frac{f^n}{n!}$, $g_n = \frac{g^n}{n!}$ and $h_n = \frac{h^n}{n!}$, then it follows from the Leibniz rule that $f_n$'s, $g_n$'s and $h_n$'s satisfy
\begin{align}
f_n(ab) = \sum_{k = 0}^{n}g_{n - k}(a) h_k(b) = \sum_{k = 0}^{n}h_{n - k}(a) g_k(b),
\end{align}
for each $a, b \in \mathcal{A}$ and each non-negative integer $n$. This is our motivation to consider the sequences $\{f_n\}$, $\{g_n\}$ and $\{h_n\}$ of linear mappings on an algebra $\mathcal{A}$ satisfying (1.1). A sequence $\{f_n\}$ of linear mappings on $\mathcal{A}$ is called a higher $\{g_n, h_n\}$-derivation if there exist two sequences $\{g_n\}$ and $\{h_n\}$ of linear mappings on $\mathcal{A}$ satisfying $f_n(ab) = \sum_{k = 0}^{n}g_{n - k}(a) h_k(b) = \sum_{k = 0}^{n}h_{n - k}(a) g_k(b)$ for any $a, b \in \mathcal{A}$ and any non-negative integer $n$. Additionally, a sequence $\{f_n\}$ of linear mappings on $\mathcal{A}$ is called a Jordan higher $\{g_n, h_n\}$-derivation if
\begin{align}
f_n(a \circ b) = \sum_{k = 0}^{n}g_{n - k}(a) \circ h_k(b),
\end{align}
for each $a, b \in \mathcal{A}$ and each non-negative integer $n$.
Notice that if $\{f_n\}$ is a higher $\{f_n, f_n\}$-derivation (resp. Jordan higher $\{f_n, f_n\}$-derivation), then it is an ordinary higher derivation (resp. Jordan higher derivation). We know that if $f$ is a $\{g, h\}$-derivation, then $\{f_n = \frac{f^n}{n !}\}$ is a higher $\{g_n, h_n\}$-derivation, where $g_n = \frac{g^n}{n !}$, $h_n = \frac{h^n}{n !}$ and $f_0 = g_0 = h_0 = I$. We call this kind of higher $\{g_n, h_n\}$-derivation an ordinary higher $\{g_n, h_n\}$-derivation, but this is not the only example of a higher $\{g_n, h_n\}$-derivation.
\\
In 2010, Miravaziri \cite{M} characterized all higher derivations on an algebra $\mathcal{A}$ in terms of the derivations on $\mathcal{A}$. In this article, by getting idea from \cite{M}, our aim is to characterize all higher $\{g_n, h_n\}$-derivations on an algebra $\mathcal{A}$ in terms of the $\{g, h\}$-derivations on $\mathcal{A}$. Indeed, we show that each higher $\{g_n, h_n\}$-derivation is a combination of compositions of $\{g, h\}$-derivations. As the main result of this article, we prove that if $\{f_n\}$ is a higher $\{g_n, h_n\}$-derivation on an algebra $\mathcal{A}$ with $f_0 = g_0 = h_0 = I$, then there is a sequence $\{F_n\}$ of $\{G_n, H_n\}$-derivations on $\mathcal{A}$ such that
\[
\left\lbrace
  \begin{array}{c l}
     f_n = \sum_{i = 1}^{n}\Bigg(\sum_{\sum_{j = 1}^{i}r_j =
n}\Big(\prod_{j = 1}^{i}\frac{1}{r_j + ... +
r_i}\Big)F_{r_1}...F_{r_i}\Bigg), & \\ \\
 g_n = \sum_{i = 1}^{n}\Bigg(\sum_{\sum_{j = 1}^{i}r_j =
n}\Big(\prod_{j = 1}^{i}\frac{1}{r_j + ... +
r_i}\Big)G_{r_1}...G_{r_i}\Bigg), & \\ \\
 h_n = \sum_{i = 1}^{n}\Bigg(\sum_{\sum_{j = 1}^{i}r_j =
n}\Big(\prod_{j = 1}^{i}\frac{1}{r_j + ... +
r_i}\Big)H_{r_1}...H_{r_i}\Bigg),
  \end{array}
\right. \]
where the inner summation is taken over all positive integers $r_j$with $\sum_{j = 1}^{i}r_j = n$. The importance of this result is to transfer the problems such as the characterization of Jordan higher $\{g_n, h_n\}$-derivations on semiprime algebras and automatic continuity of higher $\{g_n, h_n\}$-derivations into the same problems concerning $\{g, h\}$-derivations. Let $\mathcal{A}$ be an algebra, $D$ be the set of all higher derivations $\{d_n\}_{n = 0, 1, ...}$ on $\mathcal{A}$ with $d_0 = I$ and $\Delta$ be the set of all sequences $\{\delta_n\}_{n = 0, 1, ...}$ of derivations on $\mathcal{A}$ with $\delta_0 = 0$.
As an application of the main result of this article, we investigate Jordan higher $\{g_n, h_n\}$-derivations on algebras. It is a classical question in which algebras (and rings) a Jordan derivation is necessarily a derivation. Let us give a brief background in this issue. In 1957, Herstein \cite{He} achieved a result which asserts any Jordan derivation on a prime ring of characteristic different from two is a derivation. A brief proof of Herstein's result can be found in \cite{b3}. In 1975, Cusack \cite{C} generalized Herstein's result to 2-torsion free semiprime rings (see also \cite{b1} for an alternative proof). Moreover, Vukman \cite{V} investigated generalized Jordan derivations on semiprime rings and he proved that every \emph{generalized Jordan derivation} of a 2-torsion free semiprime ring is a generalized derivation. Recently, the first name author along with Ajda Fo$\check{s}$ner \cite{H} have studied the same problem for $(\sigma, \tau$)-derivations from a $C^{\ast}$-algebra $\mathcal{A}$ into a Banach $\mathcal{A}$-module $\mathcal{M}$.
In this paper, we show that if $\{f_n\}$ is a Jordan higher $\{g_n, h_n\}$-derivation of a semiprime algebra $\mathcal{A}$ with $f_0 = g_0 = h_0 = I$, then it is a higher $\{g_n, h_n\}$-derivation.

\section{characterization of higher $\{g_n, h_n\}$-derivations on algebras}
Throughout the article, $\mathcal{A}$ denotes an algebra over a field of characteristic zero, and $I$ is the identity mapping on $\mathcal{A}$. Let $f, g, h:\mathcal{A} \rightarrow \mathcal{A}$ be linear maps. We say that $f$ is a $\{g, h\}$-derivation if $f(ab) = g(a) b + a h(b) = h(a) b + a g(b)$ for all $a, b \in \mathcal{A}$, and $f$ is called a Jordan $\{g, h\}$-derivation if $f(a \circ b) = g(a) \circ b + a \circ h(b)$ for all $a, b \in \mathcal{A}$, where $a \circ b = ab + ba$. \\

We begin with the following definition.
\begin{definition}A sequence $\{f_n\}$ of linear mappings on $\mathcal{A}$ is called a higher $\{g_n, h_n\}$-derivation if there are two sequences $\{g_n\}$ and $\{h_n\}$ of linear mappings on $\mathcal{A}$ such that $f_n(ab) = \sum_{k = 0}^{n}g_{n - k}(a) h_k(b) = \sum_{k = 0}^{n}h_{n - k}(a) g_k(b)$ for each $a, b \in \mathcal{A}$ and each non-negative integer $n$.
\end{definition}

We begin our results with the following lemma which will be used extensively to prove the main theorem of this article. The following lemma has been motivated by \cite{M}.
\begin{lemma} \label{1} Let $\{f_n\}$ be a higher $\{g_n, h_n\}$-derivation on an algebra $\mathcal{A}$ with $f_0 = g_0 = h_0 =I$. Then there is a sequence $\{F_n\}$ of $\{G_n, H_n\}$-derivations on $\mathcal{A}$ such that
\[
\left\lbrace
  \begin{array}{c l}
     (n + 1) f_{n + 1} = \sum_{k = 0}^{n}F_{k + 1}f_{n - k}, & \\ \\
 (n + 1) g_{n + 1} = \sum_{k = 0}^{n}G_{k + 1}g_{n - k}, & \\ \\
 (n + 1) h_{n + 1} = \sum_{k = 0}^{n}H_{k + 1}h_{n - k}
  \end{array}
\right. \]
for each non-negative integer $n$.
\end{lemma}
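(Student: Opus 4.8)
The plan is to define the three sequences $\{F_n\}$, $\{G_n\}$, $\{H_n\}$ recursively so that the three displayed relations hold by construction, and then to verify that the resulting $F_n$ is genuinely a $\{G_n, H_n\}$-derivation. Concretely, set $F_1 = f_1$, $G_1 = g_1$, $H_1 = h_1$, and for $n \geq 1$ define
\[
F_{n+1} = (n+1)f_{n+1} - \sum_{k=0}^{n-1}F_{k+1}f_{n-k},
\]
together with the analogous recursions defining $G_{n+1}$ from $\{g_n\}$ and $H_{n+1}$ from $\{h_n\}$. With these definitions all three equations in the statement hold automatically, since each is merely the rearrangement of the corresponding defining equation (using $f_0 = g_0 = h_0 = I$). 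Hence the only real content is to prove that each $F_n$ satisfies $F_n(ab) = G_n(a)b + aH_n(b) = H_n(a)b + aG_n(b)$, and this I would establish by strong induction on $n$.

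For the base case $n = 1$, we have $F_1 = f_1$, $G_1 = g_1$, $H_1 = h_1$, and specializing the defining identity of a higher $\{g_n, h_n\}$-derivation to $n = 1$ gives precisely $f_1(ab) = g_1(a)b + ah_1(b) = h_1(a)b + ag_1(b)$; thus $F_1$ is a $\{G_1, H_1\}$-derivation. For the inductive step, assume $F_1, \dots, F_n$ are $\{G_1, H_1\}, \dots, \{G_n, H_n\}$-derivations, and compute $F_{n+1}(ab)$ from its recursive definition as $(n+1)f_{n+1}(ab) - \sum_{k=0}^{n-1}F_{k+1}\big(f_{n-k}(ab)\big)$. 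I would then expand $f_{n+1}(ab)$ and each $f_{n-k}(ab)$ via the Leibniz-type identity (1.1), and apply the induction hypothesis to pull each $F_{k+1}$ through the products $g_p(a)\,h_q(b)$ as a $\{G_{k+1}, H_{k+1}\}$-derivation, producing a double sum of terms of the two shapes $G_{k+1}(g_p(a))h_q(b)$ and $g_p(a)H_{k+1}(h_q(b))$.

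The heart of the argument is the ensuing bookkeeping. I would first isolate the extremal terms of this double sum, namely those carrying a factor $h_0 = I$ or $g_0 = I$; these reassemble, via the defining recursions for $G_{n+1}$ and $H_{n+1}$, into exactly $G_{n+1}(a)b + aH_{n+1}(b)$. For the surviving terms, regrouping the first shape by the index $q$ on $h_q(b)$ and the second shape by the index $p$ on $g_p(a)$ turns the inner summations into the right-hand sides of the $G$- and $H$-recursions, i.e. into expressions of the form $(m+1)g_{m+1}(a)$ and $(m+1)h_{m+1}(b)$. A short count shows that the coefficient of $g_{n+1-j}(a)h_j(b)$ acquires the weight $(n+1-j)$ from the first regrouping and the weight $j$ from the second; these sum to $n+1$, which is precisely the multiplicity with which $g_{n+1-j}(a)h_j(b)$ (for $1 \le j \le n$) appears in the leading term $(n+1)f_{n+1}(ab)$. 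The cancellation is therefore exact and leaves $F_{n+1}(ab) = G_{n+1}(a)b + aH_{n+1}(b)$.

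The main obstacle I anticipate is purely combinatorial: organizing the double sums so that the inner sums collapse cleanly through the $g$- and $h$-recursions, and correctly tracking the extremal terms that build $G_{n+1}(a)b$ and $aH_{n+1}(b)$. The second required identity $F_{n+1}(ab) = H_{n+1}(a)b + aG_{n+1}(b)$ follows by running the identical computation, but invoking the induction hypothesis in its other form $F_{k+1}(xy) = H_{k+1}(x)y + xG_{k+1}(y)$ together with the second equality in (1.1); since both equalities then hold for $F_{n+1}$, it is a genuine $\{G_{n+1}, H_{n+1}\}$-derivation, which completes the induction and the proof.
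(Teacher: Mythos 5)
Your proposal is correct and follows essentially the same route as the paper: the same recursive definition $F_{n+1} = (n+1)f_{n+1} - \sum_{k=0}^{n-1}F_{k+1}f_{n-k}$ (and its $G$, $H$ analogues), the same strong induction, and the same Leibniz-expansion-plus-regrouping argument, with the extremal $g_0$, $h_0$ terms assembling $G_{n+1}(a)b + aH_{n+1}(b)$ and the interior weights $j$ and $n+1-j$ cancelling the leading $(n+1)f_{n+1}(ab)$. The only cosmetic difference is that the paper splits the coefficient $n+1$ as $k + (n+1-k)$ at the outset into two blocks it calls $G$ and $H$, whereas you perform the identical cancellation at the end; the treatment of the second identity $F_{n+1}(ab) = H_{n+1}(a)b + aG_{n+1}(b)$ by symmetry is likewise the same.
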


\begin{proof} Using induction on $n$, we prove this lemma. Let $n = 0$. We know that $f_1(ab) = g_1(a) b + a h_1(b) = h_1(a) b + a g_1(b)$ for all $a, b \in \mathcal{A}$. Thus, if $F_1 = f_1$, $G_1 = g_1$ and $H_1 = h_1$, then $F_1$ is a $\{G_1, H_1\}$-derivation on $\mathcal{A}$ and further, $(0 + 1)f_{0 + 1} = \sum_{k = 0}^{0}F_{k + 1}f_{0 - k}$, $(0 + 1)g_{0 + 1} = \sum_{k = 0}^{0}G_{k + 1}g_{0 - k}$ and $(0 + 1)h_{0 + 1} = \sum_{k = 0}^{0}H_{k + 1}h_{0 - k}$. As induction assumption, suppose that $F_k$ is a $\{G_k, H_k\}$-derivation for any $k \leq n$ and further \[
\left\lbrace
  \begin{array}{c l}
     (r + 1) f_{r + 1} = \sum_{k = 0}^{r}F_{k + 1}f_{r - k}, & \\ \\
 (r + 1) g_{r + 1} = \sum_{k = 0}^{r}G_{k + 1}g_{r - k}, & \\ \\
 (r + 1) h_{r + 1} = \sum_{k = 0}^{r}H_{k + 1}h_{r - k}
  \end{array}
\right. \]   for $ r = 0, 1, ..., n - 1$. Put $F_{n + 1} = (n + 1) f_{n + 1} - \sum_{k = 0}^{n - 1}F_{k + 1}f_{n - k}$, $G_{n + 1} = (n + 1)g_{n + 1} - \sum_{k = 0}^{n - 1}G_{k + 1}g_{n - k}$ and $H_{n + 1} = (n + 1)h_{n + 1} - \sum_{k = 0}^{n - 1}H_{k + 1}h_{n - k}$. Our next task is to show that $F_{n + 1}$ is a $\{G_{n + 1}, H_{n + 1}\}$-derivation on $\mathcal{A}$. For $a, b \in \mathcal{A}$, we have
\begin{align*}
F_{n + 1}(ab) & = (n + 1) f_{n + 1}(ab) - \sum_{k = 0}^{n - 1}F_{k + 1}f_{n - k}(ab) \\ & = (n + 1) \sum_{k = 0}^{n + 1}g_k(a) h_{n + 1 - k}(b) - \sum_{k = 0}^{n - 1}F_{k + 1} \Big(\sum_{l = 0}^{n - k}g_l(a) h_{n - k - l}(b)\Big).
\end{align*}
So, we have
\begin{align*}
F_{n + 1}(ab) & = \sum_{k = 0}^{n + 1}(n + 1) g_k(a) h_{n + 1 - k}(b) - \sum_{k = 0}^{n - 1}F_{k + 1} \Big(\sum_{l = 0}^{n - k}g_l(a) h_{n - k - l}(b)\Big) \\ & = \sum_{k = 0}^{n + 1}(k + n + 1 - k)g_k(a) h_{n + 1 - k}(b) - \sum_{k = 0}^{n - 1}F_{k + 1} \Big(\sum_{l = 0}^{n - k}g_l(a) h_{n - k - l}(b)\Big).
\end{align*}
Since $F_k$ is a $\{G_k, H_k\}$-derivation for each $k = 1, 2, ..., n$,
\begin{align*}
F_{n + 1}(ab) & = \sum_{k = 0}^{n + 1}k g_{k}(a) h_{n + 1 - k}(b) + \sum_{k = 0}^{n + 1}g_k(a) (n + 1 - k) h_{n + 1 - k}(b) \\ & - \sum_{k = 0}^{n - 1}\sum_{l = 0}^{n - k}\Big[G_{k + 1}\big(g_l(a)\big)h_{n - k - l}(b) + g_l(a) H_{k + 1}\big(h_{n - k - l}(b)\big) \Big].
\end{align*}
Letting
\begin{align*}
& G = \sum_{k = 0}^{n + 1}k g_{k}(a) h_{n + 1 - k}(b) - \sum_{k = 0}^{n - 1}\sum_{l = 0}^{n - k}G_{k + 1}\big(g_l(a)\big)h_{n - k - l}(b), \\ \\ & H = \sum_{k = 0}^{n + 1} g_{k}(a)(n + 1 - k) h_{n + 1 - k}(b) - \sum_{k = 0}^{n - 1}\sum_{l = 0}^{n - k}g_l(a)H_{k + l}\big(h_{n - k - l}(b)\big),
\end{align*}
we have $F_{n + 1}(ab) = G + H$. Here, we compute $G$ and $H$. In the summation $\sum_{k = 0}^{n - 1}\sum_{l = 0}^{n - k}$, we have $0 \leq k + l \leq n$ and $k \neq n$. Thus if we put $r = k + l$, then we can write it as the form $\sum_{r = 0}^{n} \sum_{k + l = r, k \neq n}$. Putting $l = r - k$, we find that
\begin{align*}
G & = \sum_{k = 0}^{n + 1}k g_{k}(a) h_{n + 1 - k}(b) - \sum_{r = 0}^{n}\sum_{0 \leq k \leq r, k \neq n}G_{k + 1}\big(g_{r - k}(a)\big)h_{n - r}(b) \\ & = \sum_{k = 0}^{n + 1}k g_{k}(a) h_{n + 1 - k}(b) - \sum_{r = 0}^{n - 1}\sum_{k = 0}^{r}G_{k + 1}\big(g_{r - k}(a)\big)h_{n - r}(b) - \sum_{k = 0}^{n - 1}G_{k + 1}\big(g_{n - k}(a)\big)b.
\end{align*}
It means that
$$ G + \sum_{k = 0}^{n - 1}G_{k + 1}\big(g_{n - k}(a)\big)b = \sum_{k = 0}^{n + 1}k g_{k}(a) h_{n + 1 - k}(b) - \sum_{r = 0}^{n - 1}\sum_{k = 0}^{r}G_{k + 1}\big(g_{r - k}(a)\big)h_{n - r}(b).$$
Putting $r + 1$ instead of $k$ in the first summation of above, we have
\begin{align*}
G & + \sum_{k = 0}^{n - 1}G_{k + 1}\big(g_{n - k}(a)\big)b \\ & = \sum_{r = 0}^{n}(r + 1) g_{r + 1}(a) h_{n - r}(b) - \sum_{r = 0}^{n - 1}\sum_{k = 0}^{r}G_{k + 1}\big(g_{r - k}(a)\big)h_{n - r}(b) \\ & = \sum_{r = 0}^{n - 1}\Big[(r + 1)g_{r + 1}(a) - \sum_{k = 0}^{r}G_{k + 1}\big(g_{r - k}(a)\big)\Big]h_{n - r}(b) + (n + 1)g_{n + 1}(a)b.
\end{align*}
According to the induction hypothesis, $(r + 1) g_{r + 1}(a) = \sum_{k = 0}^{r}G_{k + 1}\big(g_{r - k}(a)\big)$ for $r = 0, ..., n - 1$. So, it is obtained that
\begin{align*}
G = \Big[(n + 1)g_{n + 1}(a) - \sum_{k = 0}^{n - 1}G_{k + 1}\big(g_{n - k}(a)\big)\Big]b = G_{n + 1}(a)b.
\end{align*}
Like above, we achieve that
\begin{align*}
H = a\Big[(n + 1)h_{n + 1} (b) - \sum_{k = 0}^{n - 1}H_{k + 1}\big(h_{n - k}(b)\big)\Big] = a H_{n + 1}(b).
\end{align*}
Therefore, we have $F_{n + 1}(ab) = G + H = G_{n + 1}(a)b + a H_{n + 1}(b)$. In the next step, we will show that $F_{n + 1}(ab) = H_{n + 1}(a) b + a G_{n + 1}(b)$. We have
\begin{align*}
F_{n + 1}(ab) & = (n + 1) f_{n + 1}(ab) - \sum_{k = 0}^{n - 1}F_{k + 1}f_{n - k}(ab) \\ & = (n + 1) \sum_{k = 0}^{n + 1}h_k(a) g_{n + 1 - k}(b) - \sum_{k = 0}^{n - 1}F_{k + 1} \Big(\sum_{l = 0}^{n - k}h_l(a) g_{n - k - l}(b)\Big).
\end{align*}
So,
\begin{align*}
F_{n + 1}(ab) & = \sum_{k = 0}^{n + 1}(n + 1) h_k(a) g_{n + 1 - k}(b) - \sum_{k = 0}^{n - 1}F_{k + 1} \Big(\sum_{l = 0}^{n - k}h_l(a) g_{n - k - l}(b)\Big) \\ & = \sum_{k = 0}^{n + 1}(k + n + 1 - k)h_k(a) g_{n + 1 - k}(b) - \sum_{k = 0}^{n - 1}F_{k + 1} \Big(\sum_{l = 0}^{n - k}h_l(a) g_{n - k - l}(b)\Big).
\end{align*}
Since $F_k$ is a $\{G_k, H_k\}$-derivation for each $k = 1, 2, ..., n$,
\begin{align*}
F_{n + 1}(ab) & = \sum_{k = 0}^{n + 1}k h_{k}(a) g_{n + 1 - k}(b) + \sum_{k = 0}^{n + 1}h_k(a) (n + 1 - k) g_{n + 1 - k}(b) \\ & - \sum_{k = 0}^{n - 1}\sum_{l = 0}^{n - k}\Big[H_{k + 1}\big(h_l(a)\big)g_{n - k - l}(b) + h_l(a) G_{k + 1}\big(g_{n - k - l}(b)\big) \Big].
\end{align*}
Letting
\begin{align*}
& \mathfrak{G} = \sum_{k = 0}^{n + 1}k h_{k}(a) g_{n + 1 - k}(b) - \sum_{k = 0}^{n - 1}\sum_{l = 0}^{n - k}H_{k + 1}\big(h_l(a)\big)g_{n - k - l}(b), \\ \\ & \mathfrak{H} = \sum_{k = 0}^{n + 1} h_{k}(a)(n + 1 - k) g_{n + 1 - k}(b) - \sum_{k = 0}^{n - 1}\sum_{l = 0}^{n - k}h_l(a)G_{k + l}\big(g_{n - k - l}(b)\big),
\end{align*}
we have $F_{n + 1}(ab) = \mathfrak{G} + \mathfrak{H}$. The next step is to compute $\mathfrak{G}$ and $\mathfrak{H}$. In the summation $\sum_{k = 0}^{n - 1}\sum_{l = 0}^{n - k}$, we have $0 \leq k + l \leq n$ and $k \neq n$. Thus if we put $r = k + l$, then we can write it as the form $\sum_{r = 0}^{n} \sum_{k + l = r, k \neq n}$. Putting $l = r - k$, we find that
\begin{align*}
\mathfrak{G} & = \sum_{k = 0}^{n + 1}k h_{k}(a) g_{n + 1 - k}(b) - \sum_{r = 0}^{n}\sum_{0 \leq k \leq r, k \neq n}H_{k + 1}\big(h_{r - k}(a)\big)g_{n - r}(b) \\ & = \sum_{k = 0}^{n + 1}k h_{k}(a) g_{n + 1 - k}(b) - \sum_{r = 0}^{n - 1}\sum_{k = 0}^{r}H_{k + 1}\big(h_{r - k}(a)\big)g_{n - r}(b) - \sum_{k = 0}^{n - 1}H_{k + 1}\big(h_{n - k}(a)\big)b.
\end{align*}
It means that
$$ \mathfrak{G} + \sum_{k = 0}^{n - 1}H_{k + 1}\big(h_{n - k}(a)\big)b = \sum_{k = 0}^{n + 1}k h_{k}(a) g_{n + 1 - k}(b) - \sum_{r = 0}^{n - 1}\sum_{k = 0}^{r}H_{k + 1}\big(h_{r - k}(a)\big)g_{n - r}(b).$$
Putting $r + 1$ instead of $k$ in the first summation of above, we have
\begin{align*}
\mathfrak{G} & + \sum_{k = 0}^{n - 1}H_{k + 1}\big(h_{n - k}(a)\big)b \\ & = \sum_{r = 0}^{n}(r + 1) h_{r + 1}(a) g_{n - r}(b) - \sum_{r = 0}^{n - 1}\sum_{k = 0}^{r}H_{k + 1}\big(h_{r - k}(a)\big)g_{n - r}(b) \\ & = \sum_{r = 0}^{n - 1}\Big[(r + 1)h_{r + 1}(a) - \sum_{k = 0}^{r}H_{k + 1}\big(h_{r - k}(a)\big)\Big]g_{n - r}(b) + (n + 1)h_{n + 1}(a)b.
\end{align*}
According to the induction hypothesis, $(r + 1) h_{r + 1}(a) = \sum_{k = 0}^{r}H_{k + 1}\big(h_{r - k}(a)\big)$ for $r = 0, ..., n - 1$. So, it is obtained that
\begin{align*}
\mathfrak{G} = \Big[(n + 1)h_{n + 1}(a) - \sum_{k = 0}^{n - 1}H_{k + 1}\big(h_{n - k}(a)\big)\Big]b = H_{n + 1}(a)b.
\end{align*}
By a reasoning like above, we get that
\begin{align*}
\mathfrak{H} = a\Big[(n + 1)g_{n + 1} (b) - \sum_{k = 0}^{n - 1}G_{k + 1}\big(g_{n - k}(b)\big)\Big] = a G_{n + 1}(b).
\end{align*}
Therefore, we have $F_{n + 1}(ab) = \mathfrak{G} + \mathfrak{H} = H_{n + 1}(a)b + a G_{n + 1}(b)$. Consequently, $F_{n + 1}$ is a $\{G_{n + 1}, H_{n + 1}\}$-derivation and the proof is complete.
\end{proof}

\begin{example} \label{*} Using Lemma \ref{1}, the first five terms of a higher $\{g_n, h_n\}$-derivation $\{f_n\}$ are
\begin{align*}
&f_0 = I, \\ &
f_1  = F_1,\\ &
2f_2 = F_1 f_1 + F_2 f_0 = F_1 F_1 + F_2,\\ &
f_2 = \frac{1}{2}F_1^{2} + \frac{1}{2}F_2,\\ &
3f_3 = F_1 f_2 + F_2 f_1 + F_3 f_0 = F_1(\frac{1}{2}F_1^{2} + \frac{1}{2}F_2) + F_2 F_1 + F_3, \\ &
f_3 = \frac{1}{6}F_1^{3} + \frac{1}{6}F_1 F_2 + \frac{1}{3} F_2 F_1 + \frac{1}{3}F_3, \\ &
4f_4 = F_1 f_3 + F_2 f_2 + F_3 f_1 + F_4 f_0 \\ & = F_1 \Big(\frac{1}{6}F_1^{3} + \frac{1}{6}F_1 F_2 + \frac{1}{3} F_2 F_1 + \frac{1}{3}F_3\Big) + F_2 \Big( \frac{1}{2}F_1^{2} + \frac{1}{2}F_2\Big) + F_3 F_1 + F_4, \\ &
f_4 = \frac{1}{24}F_1^{4} + \frac{1}{24}F_1^2 F_2 + \frac{1}{12} F_1 F_2 F_1 + \frac{1}{12}F_1 F_3 + \frac{1}{8}F_2 F_1^2 + \frac{1}{8}F_{2}^2 + \frac{1}{4}F_3 F_1 + \frac{1}{4}F_4.
\end{align*}
\end{example}

Now the main result of this paper reads as follows:
\begin{theorem}\label{2}Let $\{f_n\}$ be a higher $\{g_n, h_n\}$-derivation on an algebra $\mathcal{A}$ with $f_0 = g_0 = h_0 = I$. Then there is a sequence $\{F_n\}$ of $\{G_n, H_n\}$-derivations on $\mathcal{A}$ such that

\[
\left\lbrace
  \begin{array}{c l}
     f_n = \sum_{i = 1}^{n}\Bigg(\sum_{\sum_{j = 1}^{i}r_j =
n}\Big(\prod_{j = 1}^{i}\frac{1}{r_j + ... +
r_i}\Big)F_{r_1}...F_{r_i}\Bigg), & \\ \\
 g_n = \sum_{i = 1}^{n}\Bigg(\sum_{\sum_{j = 1}^{i}r_j =
n}\Big(\prod_{j = 1}^{i}\frac{1}{r_j + ... +
r_i}\Big)G_{r_1}...G_{r_i}\Bigg), & \\ \\
 h_n = \sum_{i = 1}^{n}\Bigg(\sum_{\sum_{j = 1}^{i}r_j =
n}\Big(\prod_{j = 1}^{i}\frac{1}{r_j + ... +
r_i}\Big)H_{r_1}...H_{r_i}\Bigg),
  \end{array}
\right. \]
where the inner summation is taken over all positive integers $r_j$with $\sum_{j = 1}^{i}r_j = n$.
\end{theorem}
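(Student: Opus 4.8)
The plan is to fix the sequence $\{F_n\}$ of $\{G_n, H_n\}$-derivations furnished by Lemma \ref{1} and to establish the three displayed identities by induction on $n$, exploiting the fact that $\{f_n\}$, $\{g_n\}$ and $\{h_n\}$ obey the \emph{same} recurrence
\[
(n+1)f_{n+1} = \sum_{k=0}^{n} F_{k+1} f_{n-k}, \qquad (n+1)g_{n+1} = \sum_{k=0}^{n} G_{k+1} g_{n-k}, \qquad (n+1)h_{n+1} = \sum_{k=0}^{n} H_{k+1} h_{n-k}.
\]
These three recurrences are formally identical: one passes from the first to the others by the substitutions $(F,f)\mapsto(G,g)$ and $(F,f)\mapsto(H,h)$. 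Hence it suffices to prove the closed form for $f_n$, after which the formulas for $g_n$ and $h_n$ follow by the word-for-word identical computation, and I would carry out the argument only for $f_n$ and then invoke this symmetry.

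For the $f$-case, denote by $\Phi_n$ the right-hand side of the claimed formula,
\[
\Phi_n = \sum_{i=1}^{n}\sum_{\substack{r_1+\dots+r_i=n\\ r_j\ge 1}} \Big(\prod_{j=1}^{i}\frac{1}{r_j+\dots+r_i}\Big) F_{r_1}\cdots F_{r_i},
\]
with the convention $\Phi_0=I=f_0$. The base case $n=1$ is immediate, since the only composition of $1$ is $r_1=1$, giving $\Phi_1=F_1=f_1$ in agreement with Lemma \ref{1}. For the inductive step I would assume $f_m=\Phi_m$ for all $m\le n$, rewrite the recurrence as $(n+1)f_{n+1}=\sum_{k=1}^{n+1}F_k\,f_{n+1-k}$, and substitute the induction hypothesis to reduce the goal to the single combinatorial identity $\sum_{k=1}^{n+1}F_k\Phi_{n+1-k}=(n+1)\Phi_{n+1}$.

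The heart of the argument, and the step I expect to be the main obstacle, is this reindexing of coefficients. In each term $F_k F_{s_1}\cdots F_{s_i}$ of $F_k\Phi_{n+1-k}$ I would set $r_1=k$ and $r_{j+1}=s_j$, turning the factor into $F_{r_1}\cdots F_{r_{i+1}}$ indexed by a composition $r_1+\dots+r_{i+1}=n+1$ with first part $k$. Since $s_j+\dots+s_i=r_{j+1}+\dots+r_{i+1}$, the coefficient $\prod_{j=1}^{i}\frac{1}{s_j+\dots+s_i}$ becomes $\prod_{j=2}^{i+1}\frac{1}{r_j+\dots+r_{i+1}}$, which is \emph{exactly} $\prod_{j=1}^{i+1}\frac{1}{r_j+\dots+r_{i+1}}$ with the single leading factor $\frac{1}{r_1+\dots+r_{i+1}}=\frac{1}{n+1}$ deleted; thus each term is short precisely the factor $\frac{1}{n+1}$, which is restored upon multiplying by $(n+1)$. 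Classifying the compositions of $n+1$ by their first part $r_1=k$—the length-one composition $r_1=n+1$ coming from the $k=n+1$ term $F_{n+1}\Phi_0=F_{n+1}$, and every composition of length $\ge 2$ coming from a unique $k\le n$—shows that summing over $k=1,\dots,n+1$ enumerates each composition of $n+1$ exactly once. This yields $\sum_{k=1}^{n+1}F_k\Phi_{n+1-k}=(n+1)\Phi_{n+1}$, hence $f_{n+1}=\Phi_{n+1}$, completing the induction. The only delicate point is the bookkeeping of the ranges of $i$ and of the first part $k$, so that decomposition-by-first-part is a genuine bijection onto the compositions of $n+1$; once that is verified, the coefficient telescoping does all the work, and the $g_n$ and $h_n$ formulas follow from the identical recurrences.
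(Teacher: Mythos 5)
Your proof is correct and takes essentially the same approach as the paper: both arguments rest on Lemma \ref{1} together with the same key combinatorial step, namely the coefficient identity $(n+1)\prod_{j=1}^{i}\frac{1}{r_j+\cdots+r_i}=\prod_{j=2}^{i}\frac{1}{r_j+\cdots+r_i}$ for $r_1+\cdots+r_i=n+1$ combined with enumerating compositions of $n+1$ by their first part, which shows the closed-form expression satisfies the recurrence $(n+1)f_{n+1}=\sum_{k=0}^{n}F_{k+1}f_{n-k}$. The only difference is organizational: you fix $\{F_n\}$ from Lemma \ref{1} and run an explicit induction showing $f_n$ equals the closed form, whereas the paper verifies the recurrence for the closed form and then recovers $F_n$ from it, leaving the uniqueness-of-solution step implicit.
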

\begin{proof}First, we show that if $f_n$, $g_n$ and $h_n$ are of the above form, then they satisfy the recursive relations of Lemma \ref{1}. Simplifying the notation, we put $a_{r_1, ..., r_i} = \prod_{j = 1}^{i}\frac{1}{r_j + ... + r_i}$. Note that if $r_1 + ... + r_i = n + 1$, then $(n + 1) a_{r_1, ..., r_i} = a_{r_2, ..., r_i}$. Furthermore, $a_{n + 1} = \frac{1}{n + 1}$. According to the aforementioned assumptions, we have
\begin{align*}
f_{n + 1 } & = \sum_{i = 2}^{n + 1}\Bigg(\sum_{\sum_{j = 1}^{i}r_j = n + 1}a_{r_1, ... r_i}F_{r_1}...F_{r_i}\Bigg) + a_{n + 1}F_{n + 1} \\ & = \sum_{i = 2}^{n + 1}\Bigg(\sum_{\sum_{j = 1}^{i}r_j = n + 1}a_{r_1, ... r_i}F_{r_1}...F_{r_i}\Bigg) + \frac{F_{n + 1}}{n + 1}.
\end{align*}
So,
\begin{align*}
(n + 1)f_{n + 1 } & = \sum_{i = 2}^{n + 1}\Bigg(\sum_{\sum_{j = 1}^{i}r_j = n + 1}(n + 1) a_{r_1, ... r_i}F_{r_1}...F_{r_i}\Bigg) + F_{n + 1} \\ & = \sum_{i = 2}^{n + 1}\Bigg(\sum_{\sum_{j = 1}^{i}r_j = n + 1} a_{r_2, ... r_i}F_{r_1}...F_{r_i}\Bigg) + F_{n + 1} \\ & = \sum_{i = 2}^{n + 1}\Bigg(\sum_{r_1 = 1}^{n + 2 - i} F_{r_1} \sum_{\sum_{j = 2}^{i}r_j = n + 1 - r_1}a_{r_2, ... r_i}F_{r_2}...F_{r_i}\Bigg) + F_{n +1} \\ & = \sum_{r_1 = 1}^{n}F_{r_1} \sum_{i = 2}^{n - (r_1 - 1)} \Bigg (\sum_{\sum_{j = 2}^{i}r_j = n - (r_1 - 1)}a_{r_2, ... r_i}F_{r_2}...F_{r_i} \Bigg) + F_{n + 1} \\ & = \sum_{r_1 = 1}^{n}F_{r_1} f_{n - (r_1 - 1)} + F_{n + 1} \\ & = \sum_{k = 0}^{n}F_{k + 1}f_{n - k}.
\end{align*}
Using a reasoning like above, we get that
\[
\left\lbrace
  \begin{array}{c l}
 (n + 1) g_{n + 1} = \sum_{k = 0}^{n}G_{k + 1}g_{n - k}, & \\ \\
 (n + 1) h_{n + 1} = \sum_{k = 0}^{n}H_{k + 1}h_{n - k},
  \end{array}
\right. \]
for each non-negative integer $n$. Putting $n + 1 = m$, we find that $mf_{m} = \sum_{k = 0}^{m - 1}F_{k + 1}f_{m - 1 - k} = \sum_{k = 0}^{m - 2}F_{k + 1}f_{m - 1 - k} + F_m$, and consequently
$$F_m = mf_m - \sum_{k = 0}^{m - 2}F_{k + 1}f_{m - 1 - k}.$$ Similarly, we have
\[
\left\lbrace
  \begin{array}{c l}
 G_m = mg_m - \sum_{k = 0}^{m - 2}G_{k + 1}g_{m - 1 - k}, & \\ \\
 H_m = mh_m - \sum_{k = 0}^{m - 2}H_{k + 1}h_{m - 1 - k}.
  \end{array}
\right. \]

Therefore, we can define $F_n, G_n, H_n : \mathcal{A} \rightarrow \mathcal{A}$ by $F_0 = G_0 = H_0 = 0$ and
\[
\left\lbrace
  \begin{array}{c l}
     F_n = n f_n - \sum_{k = 0}^{n - 2}F_{k + 1}f_{n - 1 - k}, & \\ \\
 G_n = n g_n - \sum_{k = 0}^{n- 2}G_{k + 1}g_{n - 1 - k}, & \\ \\
 H_n = n h_n - \sum_{k = 0}^{n - 2}H_{k + 1}h_{n - 1 - k},
  \end{array}
\right. \]
for each positive integer $n$. It follows from Lemma \ref{1} that $\{F_n\}$ is a sequence of $\{G_n, H_n\}$-derivations. In addition, we prove that if $f_n$, $g_n$ and $h_n$ are of the form
\[
\left\lbrace
  \begin{array}{c l}
 (n + 1) f_{n + 1} = \sum_{k = 0}^{n}F_{k + 1}f_{n - k}, & \\ \\
 (n + 1) g_{n + 1} = \sum_{k = 0}^{n}G_{k + 1}g_{n - k}, & \\ \\
 (n + 1) h_{n + 1} = \sum_{k = 0}^{n}H_{k + 1}h_{n - k},
  \end{array}
\right. \]
where $\{F_n\}$ is a sequence of $\{G_n, H_n\}$-derivations, then $\{f_n\}$ is a higher $\{g_n, h_n\}$-derivation on $\mathcal{A}$ with $f_0 = g_0 = h_0 = I$. To see this, we use induction on $n$. For $n = 0$, we have $f_0(ab) = ab = g_0(a)h_0(b) = h_0(a) g_0(b)$. As the inductive hypothesis, assume that
\begin{align*}
f_k(ab) = \sum_{i = 0}^{k}g_{i}(a)h_{k - i}(b) = \sum_{i = 0}^{k}h_{i}(a)g_{k - i}(b) \ \ for \ k \leq n.
\end{align*}

Therefore, we have
\begin{align*}
(n + 1)f_{n + 1}(ab) & = \sum_{k = 0}^{n}F_{k + 1}f_{n - k}(ab) \\ & = \sum_{k = 0}^{n}F_{k + 1}\sum_{i = 0}^{n - k}g_i(a) h_{n - k - i}(b) \\ & = \sum_{i = 0}^{n}\Bigg(\sum_{k = 0}^{n - i}G_{k + 1}g_{n - k - i}(a)\Bigg)h_i(b) + \sum_{i = 0}^{n}g_i(a)\Bigg(\sum_{k = 0}^{n - i}H_{k + 1}h_{n - k - i}(b)\Bigg).
\end{align*}
According to the above-mentioned recursive relations, we continue the previous expressions as follows:
\begin{align*}
(n + 1)f_{n + 1}(ab) & = \sum_{i = 0}^{n}(n - i + 1)g_{n - i + 1}(a)h_i(b) + \sum_{i = 0}^{n}g_i(a)(n - i + 1)h_{n - i + 1}(b) \\ & = \sum_{i = 1}^{n + 1}ig_{i}(a)h_{n + 1 - i}(b) + \sum_{i = 0}^{n}(n - i + 1)g_i(a)h_{n + 1 - i}(b) \\ & = (n + 1)\sum_{i = 0}^{n + 1}g_{i}(a)h_{n + 1 - i}(b),
\end{align*}
which means that $f_{n + 1}(ab) = \sum_{i = 0}^{n + 1}g_{i}(a)h_{n + 1 - i}(b)$. By an argument like above, we can obtain that $f_{n + 1}(ab) = \sum_{i = 0}^{n + 1}h_{i}(a)g_{n + 1 - i}(b)$. Thus, $\{f_n\}$ is a higher $\{g_n, h_n\}$-derivation on $\mathcal{A}$ which is characterized by the sequence $\{F_n\}$ of $\{G_n, H_n\}$-derivations. This completes the proof.
\end{proof}
 In the next example, using the above theorem, we characterize term $f_4$ of a higher $\{g_n, h_n\}$-derivation $\{f_n\}$.
\begin{example} We compute the coefficients $a_{r_1, ..., r_i}$ for the case $n = 4$. First, note that
$4 = 1 + 3 = 3 + 1 = 2 + 2 = 1 + 1 + 2 = 1 + 2 + 1 = 2 + 1 + 1 = 1 + 1 + 1 + 1.$
Based on the definition of $a_{r_1, ..., r_i}$ we have
\begin{align*}
& a_4 = \frac{1}{4}, \\ &
a_{1, 3} = \frac{1}{1 + 3}. \frac{1}{3} = \frac{1}{12}, \\ &
a_{3, 1} = \frac{1}{3 + 1}. \frac{1}{1} = \frac{1}{4}, \\ &
a_{2, 2} = \frac{1}{2 + 2}. \frac{1}{2} = \frac{1}{8}, \\ &
a_{1, 1, 2} = \frac{1}{1 + 1 + 2}. \frac{1}{1 + 2}. \frac{1}{2} = \frac{1}{24}, \\ &
a_{1, 2, 1} = \frac{1}{1 + 2 + 1}. \frac{1}{2 + 1}. \frac{1}{1} = \frac{1}{12}, \\ &
a_{2, 1, 1} = \frac{1}{2 + 1 + 1}. \frac{1}{1 + 1}. \frac{1}{1} = \frac{1}{8}, \\ &
a_{1, 1,1, 1} = \frac{1}{1 + 1 + 1 + 1}. \frac{1}{1 + 1 + 1}. \frac{1}{1 + 1}. \frac{1}{1} = \frac{1}{24}.
\end{align*}
Therefore, $f_4$ is characterized as follows:\\
\begin{align*}
f_ 4 = & \frac{1}{4}F_4 + \frac{1}{12}F_1 F_3 + \frac{1}{4}F_3 F_1 + \frac{1}{8}F_{2} F_2 + \frac{1}{24}F_1 F_1 F_2 + \frac{1}{12} F_1 F_2 F_1 \\ & + \frac{1}{8}F_2 F_1 F_1 + \frac{1}{24}F_1 F_1 F_1 F_1.
\end{align*}
\end{example}

In the following there are some immediate consequences of the above theorem. Before it, recall that a sequence $\{f_n\}$ of linear mappings on $\mathcal{A}$ is called a Jordan higher $\{g_n, h_n\}$-derivation if there exist two sequences $\{g_n\}$ and $\{h_n\}$ of linear mappings on $\mathcal{A}$ such that $f_n(a \circ b) = \sum_{k = 0}^{n}g_{n - k}(a) \circ h_k(b)$ holds for each $a, b \in \mathcal{A}$ and each non-negative integer $n$.
Since the Jordan product is commutative, we have
\begin{align*}
f_n(a \circ b) = f_n(b \circ a) & = \sum_{k = 0}^{n}g_{n - k}(b) \circ h_k(a) \\ & = \sum_{k = 0}^{n}g_{k}(b) \circ h_{n - k}(a) \\ & = \sum_{k = 0}^{n}h_{n - k}(a) \circ g_k(b).
\end{align*}
So, it is observed that if $\{f_n\}$ is a Jordan higher $\{g_n, h_n\}$-derivation, then
$$f_n(a \circ b) = \sum_{k = 0}^{n}g_{n - k}(a) \circ h_k(b) = \sum_{k = 0}^{n}h_{n - k}(a) \circ g_k(b),$$ for all $a, b \in \mathcal{A}$.

\begin{corollary} \label{6} Let $\{f_n\}$ be a Jordan higher $\{g_n, h_n\}$-derivation on a semiprime algebra $\mathcal{A}$ with $f_0 = g_0 = h_0 = I$. Then $\{f_n\}$ is a higher $\{g_n, h_n\}$-derivation.
\end{corollary}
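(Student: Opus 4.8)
The plan is to run a Jordan analogue of the decomposition developed in Lemma \ref{1} and Theorem \ref{2}, and then to feed each resulting component into Bre\v{s}ar's theorem term by term. Concretely, I would first define linear maps $F_n, G_n, H_n : \mathcal{A} \rightarrow \mathcal{A}$ by $F_0 = G_0 = H_0 = 0$ and the same recursion used in the proof of Theorem \ref{2}, namely
\[
F_n = n f_n - \sum_{k=0}^{n-2} F_{k+1} f_{n-1-k},
\]
and likewise for $G_n$ and $H_n$. By construction these satisfy $(n+1) f_{n+1} = \sum_{k=0}^{n} F_{k+1} f_{n-k}$, together with the corresponding relations for $g_n$ and $h_n$; this is a purely formal step requiring no semiprimeness.

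The heart of the argument is the Jordan version of Lemma \ref{1}: I would prove by induction on $n$ that each $F_n$ is a \emph{Jordan} $\{G_n, H_n\}$-derivation, that is, $F_n(a \circ b) = G_n(a) \circ b + a \circ H_n(b)$. The base case $n=1$ is immediate, since $F_1 = f_1$, $G_1 = g_1$, $H_1 = h_1$ and $\{f_n\}$ is a Jordan higher $\{g_n, h_n\}$-derivation. For the inductive step I would expand $F_{n+1}(a \circ b)$ exactly as in the proof of Lemma \ref{1}, but with the associative product replaced throughout by the Jordan product: insert $f_{n+1}(a \circ b) = \sum_{k} g_k(a) \circ h_{n+1-k}(b)$ and $f_{n-k}(a \circ b) = \sum_{l} g_l(a) \circ h_{n-k-l}(b)$, and apply the inductive hypothesis $F_{k+1}(x \circ y) = G_{k+1}(x) \circ y + x \circ H_{k+1}(y)$ to each summand. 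The crucial observation is that the combinatorial bookkeeping in Lemma \ref{1} never uses associativity or any multiplicativity of $g_n, h_n$ themselves: it only separates the ``left slot'' and ``right slot'' contributions and invokes the definitional relation $(r+1) g_{r+1} = \sum_{k=0}^{r} G_{k+1} g_{r-k}$. Since $\circ$ is bilinear and the expansion $G_{k+1}(x) \circ y + x \circ H_{k+1}(y)$ has the same left/right slot structure as the associative rule, the identical reindexing yields $F_{n+1}(a \circ b) = G_{n+1}(a) \circ b + a \circ H_{n+1}(b)$.

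With each $F_n$ now known to be a Jordan $\{G_n, H_n\}$-derivation on the semiprime algebra $\mathcal{A}$, I would invoke Bre\v{s}ar's result \cite[Theorem 4.3]{B} to conclude that each $F_n$ is in fact a genuine $\{G_n, H_n\}$-derivation; this is the only point at which semiprimeness enters. Finally, since $\{F_n\}$ is now a sequence of $\{G_n, H_n\}$-derivations satisfying the recursion $(n+1) f_{n+1} = \sum_{k=0}^{n} F_{k+1} f_{n-k}$ and its $g$- and $h$-counterparts with $f_0 = g_0 = h_0 = I$, the converse direction already established inside the proof of Theorem \ref{2} applies verbatim and shows that $\{f_n\}$ is a higher $\{g_n, h_n\}$-derivation.

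I expect the main obstacle to be the inductive step of the Jordan analogue of Lemma \ref{1}: one must check that every reindexing and term-collection performed there remains valid once products are replaced by Jordan products. The potential worry is the non-associativity of $\circ$, but because the argument only ever expands a single $F_{k+1}$ across a product of two elements and then regroups by linearity, associativity is never actually invoked, so the computation transfers intact. Commutativity of $\circ$ additionally makes the two one-sided identities coincide, so that establishing the single identity $F_{n+1}(a \circ b) = G_{n+1}(a) \circ b + a \circ H_{n+1}(b)$ suffices.
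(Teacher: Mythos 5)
Your proposal is correct and takes essentially the same route as the paper: the paper's (very terse) proof likewise runs the Jordan analogue of the Lemma \ref{1}/Theorem \ref{2} decomposition to produce a sequence $\{F_n\}$ of Jordan $\{G_n, H_n\}$-derivations, invokes Bre\v{s}ar's \cite[Theorem 4.3]{B} on the semiprime algebra to upgrade each $F_n$ to a genuine $\{G_n, H_n\}$-derivation, and then recovers $\{f_n\}$ as a higher $\{g_n, h_n\}$-derivation via the converse direction established in the proof of Theorem \ref{2}. Your explicit observation that the bookkeeping in Lemma \ref{1} uses only bilinearity of the product (never associativity), so it transfers verbatim to the Jordan product, is exactly the justification the paper leaves implicit in the phrase ``Using the proof of Theorem \ref{2}, we can show that\ldots''.
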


\begin{proof} Using the proof of Theorem \ref{2}, we can show that if $\{f_n\}$ is a Jordan higher $\{g_n, h_n\}$-derivation on an algebra $\mathcal{A}$ with $f_0 = g_0 = h_0 = I$, then there exists a sequence $\{F_n\}$ of Jordan $\{G_n, H_n\}$-derivations on $\mathcal{A}$ such that
\begin{align*}
f_n = \sum_{i = 1}^{n}\big(\sum_{\sum_{j = 1}^{i}r_j =
n}\big(\prod_{j = 1}^{i}\frac{1}{r_j + ... +
r_i}\big)F_{r_1}...F_{r_i}\big),
\end{align*}
where the inner summation is taken over all positive integers $r_j$ with $\sum_{j = 1}^{i}r_j = n$. Since $\mathcal{A}$ is a semiprime algebra, \cite[Theorem 4.3]{B} proves the corollary.
\end{proof}

\begin{remark}We know that the notion of a Jordan $\{g, h\}$-derivation is a generalization of a Jordan generalized derivation (see Introduction). A sequence $\{f_n\}$ of linear mappings on an algebra $\mathcal{A}$ is called a \emph{Jordan higher generalized derivation} if there exists a sequence $\{d_n\}$ of linear mappings on $\mathcal{A}$ such that $f_n(a \circ b) = \sum_{k = 0}^{n}f_{n - k}(a) \circ d_k(b)$ for all $a, b \in \mathcal{A}.$  Obviously, if $\{f_n\}$ is a Jordan higher generalized derivation associated with a sequence $\{d_n\}$ of linear mappings on $\mathcal{A}$, then
it is a Jordan higher $\{f_n, d_n\}$-derivation. So, Corollary \ref{6} is also valid for \emph{Jordan higher generalized derivations}.
\end{remark}

\begin{theorem} If $\{f_n\}_{n = 0, 1, ...}$ is a higher $\{g_n, h_n\}$-derivation on $\mathcal{A}$ with $f_0 = g_0 = h_0 = I$, then there is a sequence $\{F_n\}_{n = 0, 1, ...}$ of $\{G_n, H_n\}$-derivations with $F_0 = G_0 = H_0 = 0$ characterizing the higher $\{g_n, h_n\}$-derivation $\{f_n\}_{n = 0, 1, ...}$. As well as if $\{F_n\}_{n = 0, 1, ...}$ is a sequence of $\{G_n, H_n\}$-derivations with $F_0 = G_0 = H_0 = 0$, then there exists a higher $\{g_n, h_n\}$-derivation $\{f_n\}$ with $f_0 = g_0 = h_0 = I$ which is characterized by the sequence $\{F_n\}_{n = 0, 1, ...}$.
\end{theorem}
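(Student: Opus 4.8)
The plan is to read this theorem as a repackaging of Lemma~\ref{1} and Theorem~\ref{2}: both implications have in essence already been established there, so what remains is to assemble them cleanly and to check the harmless bookkeeping surrounding the convention $F_0 = G_0 = H_0 = 0$.

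For the first implication, I would start from a higher $\{g_n, h_n\}$-derivation $\{f_n\}$ with $f_0 = g_0 = h_0 = I$ and apply Lemma~\ref{1} to produce a sequence $\{F_n\}_{n \geq 1}$ of $\{G_n, H_n\}$-derivations satisfying the recursions $(n+1)f_{n+1} = \sum_{k=0}^{n} F_{k+1} f_{n-k}$ together with the analogous identities for $g$ and $h$. I would then extend the sequences by setting $F_0 = G_0 = H_0 = 0$; this is legitimate because the recursions only ever invoke $F_{k+1}, G_{k+1}, H_{k+1}$ with $k \geq 0$, so the value at index $0$ never enters, and the choice $0$ is harmless. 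Finally, the first part of the proof of Theorem~\ref{2} shows that these recursions are equivalent to the closed formula $f_n = \sum_{i=1}^{n}\sum_{\sum_{j=1}^{i} r_j = n} \big(\prod_{j=1}^{i}\frac{1}{r_j + \cdots + r_i}\big) F_{r_1}\cdots F_{r_i}$ (and likewise for $g_n$ and $h_n$), which is precisely the statement that $\{F_n\}$ characterizes $\{f_n\}$.

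For the converse implication, I would begin with an arbitrary sequence $\{F_n\}$ of $\{G_n, H_n\}$-derivations with $F_0 = G_0 = H_0 = 0$ and define $f_n, g_n, h_n$ by the closed formula of Theorem~\ref{2}, imposing $f_0 = g_0 = h_0 = I$. As established in the first part of the proof of Theorem~\ref{2}, this closed form is equivalent to the recursions $(n+1)f_{n+1} = \sum_{k=0}^{n} F_{k+1} f_{n-k}$, and similarly for $g$ and $h$. The induction carried out in the second part of that proof---using at each step that every $F_k$ is a $\{G_k, H_k\}$-derivation and that the lower-order $f_k$ already obey the Leibniz-type identity---then yields $f_n(ab) = \sum_{i=0}^{n} g_i(a) h_{n-i}(b) = \sum_{i=0}^{n} h_i(a) g_{n-i}(b)$, so that $\{f_n\}$ is indeed a higher $\{g_n, h_n\}$-derivation characterized by $\{F_n\}$.

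I expect no genuine obstacle here beyond bookkeeping, since the substantive work lives in Lemma~\ref{1} and Theorem~\ref{2}. The only points demanding care are (i) confirming that extending the sequences by $F_0 = G_0 = H_0 = 0$ leaves the recursions untouched, and (ii) keeping the two equivalent descriptions---the recursion and the closed formula---aligned so that the word \emph{characterizes} is applied consistently in both directions. Because both descriptions are already reconciled within the proof of Theorem~\ref{2}, the present theorem follows by quoting those two results.
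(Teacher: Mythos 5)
Your proposal is correct and follows essentially the same route as the paper: the paper likewise proves the first implication by defining $\{F_n\}$, $\{G_n\}$, $\{H_n\}$ via the recursions from Lemma~\ref{1} (with $F_0 = G_0 = H_0 = 0$) and proves the converse by defining $f_n$, $g_n$, $h_n$ through the closed formula and invoking the equivalence with the recursions plus the induction from the last part of the proof of Theorem~\ref{2}. Your attention to the harmlessness of the convention $F_0 = G_0 = H_0 = 0$ and to the consistent meaning of \emph{characterizes} matches the paper's implicit bookkeeping.
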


\begin{proof}Let $\{f_n\}$ be a higher $\{g_n, h_n\}$-derivation on $\mathcal{A}$ with $f_0 = g_0 = h_0 = I$. We are going to obtain a sequence $\{F_n\}_{n = 0, 1, ...}$ of $\{G_n, H_n\}$-derivations with $F_0 = G_0 = H_0 = 0$ that characterizes the higher $\{g_n, h_n\}$-derivation $\{f_n\}$. Define $F_n, G_n, H_n : \mathcal{A} \rightarrow \mathcal{A}$ by $F_0 = G_0 = H_0 = 0$ and
\[
\left\lbrace
  \begin{array}{c l}
     F_n = n f_n - \sum_{k = 0}^{n - 2}F_{k + 1}f_{n - 1 - k}, & \\ \\
 G_n = n g_n - \sum_{k = 0}^{n- 2}G_{k + 1}g_{n - 1 - k}, & \\ \\
 H_n = n h_n - \sum_{k = 0}^{n - 2}H_{k + 1}h_{n - 1 - k},
  \end{array}
\right. \]
for each positive integer $n$. Then it follows from Lemma \ref{1} that $\{F_n\}$ is a sequence of $\{G_n, H_n\}$-derivations characterizing the higher $\{g_n, h_n\}$-derivation $\{f_n\}$. Now suppose that $\{F_n\}_{n = 0, 1, ...}$ is a sequence of $\{G_n, H_n\}$-derivations with $F_0 = G_0 = H_0 = 0$. We will show that there exists a higher $\{g_n, h_n\}$-derivation $\{f_n\}$ with $f_0 = g_0 = h_0 = I$ which is characterized by the sequence $\{F_n\}_{n = 0, 1, ...}$. We define $f_n, g_n, h_n : \mathcal{A} \rightarrow \mathcal{A}$ by $f_0 = g_0 = h_0 = I$ and
\[
\left\lbrace
  \begin{array}{c l}
     f_n = \sum_{i = 1}^{n}\Bigg(\sum_{\sum_{j = 1}^{i}r_j =
n}\Big(\prod_{j = 1}^{i}\frac{1}{r_j + ... +
r_i}\Big)F_{r_1}...F_{r_i}\Bigg), & \\ \\
 g_n = \sum_{i = 1}^{n}\Bigg(\sum_{\sum_{j = 1}^{i}r_j =
n}\Big(\prod_{j = 1}^{i}\frac{1}{r_j + ... +
r_i}\Big)G_{r_1}...G_{r_i}\Bigg), & \\ \\
 h_n = \sum_{i = 1}^{n}\Bigg(\sum_{\sum_{j = 1}^{i}r_j =
n}\Big(\prod_{j = 1}^{i}\frac{1}{r_j + ... +
r_i}\Big)H_{r_1}...H_{r_i}\Bigg).
  \end{array}
\right. \]
By Theorem \ref{2}, $f_n$, $g_n$ and $h_n$ satisfy the following recursive relations:
\[
\left\lbrace
  \begin{array}{c l}
(n + 1) f_{n + 1} = \sum_{k = 0}^{n}F_{k + 1}f_{n - k}, & \\ \\
(n + 1) g_{n + 1} = \sum_{k = 0}^{n}G_{k + 1}g_{n - k}, & \\ \\
(n + 1) h_{n + 1} = \sum_{k = 0}^{n}H_{k + 1}h_{n - k}.
  \end{array}
\right. \]

Based on the last part of the proof of Theorem \ref{2}, $\{f_n\}$ is a higher $\{g_n, h_n\}$-derivation on $\mathcal{A}$ with $f_0 = g_0 = h_0 = I$. This yields the desired result.
\end{proof}

An immediate corollary of the precede theorem reads as follows:

\begin{corollary}\label{+} If $\{d_n\}_{n = 0, 1, ...}$ with $d_0 = I$ is a higher derivation on $\mathcal{A}$, then there is a sequence $\{\delta_n\}_{n = 0, 1, ...}$ of derivations with $\delta_0 = 0$ characterizing the higher derivation $\{d_n\}_{n = 0, 1, ...}$. As well as if $\{\delta_n\}_{n = 0, 1, ...}$ is a sequence of derivations with $\delta_0 = 0$, then there exists a higher derivation $\{d_n\}$ with $d_0 = I$ which is characterized by the sequence $\{\delta_n\}_{n = 0, 1, ...}$.
\end{corollary}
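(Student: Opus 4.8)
The plan is to recognize an ordinary (higher) derivation as the diagonal special case $g_n = h_n = f_n$ of a higher $\{g_n, h_n\}$-derivation, so that the corollary follows at once from the preceding theorem by specialization.

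First I would record two elementary identifications. A derivation $\delta$ is exactly a $\{\delta, \delta\}$-derivation: setting $f = g = h = \delta$ in the defining identity $f(ab) = g(a)b + ah(b) = h(a)b + ag(b)$ collapses it to $\delta(ab) = \delta(a)b + a\delta(b)$. In the same way, a higher derivation $\{d_n\}$, characterized by $d_n(ab) = \sum_{k=0}^{n} d_{n-k}(a) d_k(b)$, is precisely a higher $\{d_n, d_n\}$-derivation, since putting $g_n = h_n = f_n = d_n$ makes both expressions in the definition of a higher $\{g_n, h_n\}$-derivation reduce to this Leibniz sum.

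For the forward implication I would apply the preceding theorem to $\{d_n\}$, regarded as a higher $\{d_n, d_n\}$-derivation with $d_0 = I$. The theorem supplies sequences $\{F_n\}, \{G_n\}, \{H_n\}$, defined by the recursions $F_n = n f_n - \sum_{k=0}^{n-2} F_{k+1} f_{n-1-k}$ and the two analogous formulas for $G_n, H_n$, with each $F_n$ a $\{G_n, H_n\}$-derivation. The crucial observation is that here $f_n = g_n = h_n = d_n$, so these three recursions are formally identical; a one-line induction on $n$, with base $F_0 = G_0 = H_0 = 0$, then yields $F_n = G_n = H_n$ for all $n$. Denoting this common map by $\delta_n$, we see $F_n = \delta_n$ is a $\{\delta_n, \delta_n\}$-derivation, that is, an ordinary derivation, with $\delta_0 = 0$; and the characterizing formula expressing $f_n = d_n$ through the $F_n = \delta_n$ is exactly the desired characterization of $\{d_n\}$ by $\{\delta_n\}$.

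For the converse I would start from a sequence $\{\delta_n\}$ of derivations with $\delta_0 = 0$ and regard it as the single datum $F_n = G_n = H_n = \delta_n$, each $\delta_n$ being a $\{\delta_n, \delta_n\}$-derivation. The converse half of the preceding theorem then produces a higher $\{g_n, h_n\}$-derivation $\{f_n\}$ with $f_0 = g_0 = h_0 = I$; since the three defining formulas for $f_n, g_n, h_n$ coincide when $F_n = G_n = H_n$, we obtain $f_n = g_n = h_n$, and this common sequence $\{d_n\}$ is a higher $\{d_n, d_n\}$-derivation, hence an ordinary higher derivation characterized by $\{\delta_n\}$. I expect no real obstacle: the only point needing care is the induction $F_n = G_n = H_n$, which is immediate from the symmetry of the recursions under interchanging $f$, $g$, and $h$.
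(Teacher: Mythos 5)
Your proof is correct and follows exactly the route the paper intends: the paper states this corollary as an immediate consequence of the preceding theorem, obtained by the diagonal specialization $f_n = g_n = h_n = d_n$, $F_n = G_n = H_n = \delta_n$, which is precisely your argument. The one detail you spell out that the paper leaves implicit --- the induction showing $F_n = G_n = H_n$ because the three defining recursions coincide in the diagonal case --- is a routine but genuinely needed verification, and you handle it correctly.
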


\vspace{.25cm}


\bibliographystyle{amsplain}

\end{document}